\documentclass[a4paper, american, 11pt, reqno]{amsart}


\usepackage[latin1]{inputenc}
\usepackage{mathabx}
\usepackage{amsmath}
\usepackage{amssymb}
\usepackage{hyperref}
\usepackage{url}
\usepackage{babel}
\usepackage{pgf,tikz,pgfplots}
\pgfplotsset{compat=1.16}

\usepackage{graphicx}

\usepackage{tikz, float} \usetikzlibrary {positioning}
\usetikzlibrary{shapes.misc}
\usetikzlibrary{arrows}

\usetikzlibrary{calc}
\usepackage{amsfonts}
\input xy
\xyoption{all}

\newcommand{\Z}{{\mathbb Z}}
\newcommand{\N}{{\mathbb N}}

\newcommand{\R}{{\mathbb R}}

\newtheorem{thm}{Theorem}[section]

\newtheorem{lemma}[thm]{Lemma}

\newtheorem{remark}[thm]{Remark}

          {\theoremstyle{definition}
}
          {\theoremstyle{definition}

}

 \numberwithin{equation}{section}

\tikzset{%
  add/.style args={#1 and #2}{to path={%
 ($(\tikztostart)!-#1!(\tikztotarget)$)--($(\tikztotarget)!-#2!(\tikztostart)$)%
  \tikztonodes}}
} 

\setcounter{tocdepth}{1}
\bibliographystyle{alpha}
\newcommand{\comment}[1]{}

\begin{document}
\title[]{Convolution of a symmetric log-concave distribution and a symmetric bimodal distribution can have any number of modes}

\author[Charles Arnal]{Charles Arnal}

\address{Charles Arnal, Univ. Paris 6, IMJ-PRG, France.}
\email{charles.arnal@imj-prg.fr} 

\maketitle

\begin{abstract}
 In this note, we show that the convolution of a discrete symmetric log-concave distribution and a discrete symmetric bimodal distribution can have any strictly positive number of modes. A similar result is proved for smooth distributions, which contradicts the main statement in \cite{Faux}.

\end{abstract}

\tableofcontents

\newcommand\blfootnote[1]{%
  \begingroup
  \renewcommand\thefootnote{}\footnote{#1}%
  \addtocounter{footnote}{-1}%
  \endgroup
}

\section*{Acknowledgement}
The author is very grateful to Cl\'ement Deslandes for helpful discussions.\blfootnote{ This research was supported by the DIM Math Innov de la R\'egion Ile-de-France.
\begin{center}
\includegraphics[width = 20mm]{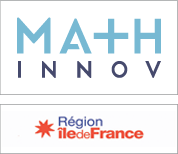} 
\end{center}  }

\section{Introduction}
Log-concave functions and sequences feature preeminently in many mathematical domains, including probability, statistics and combinatorics, but also more surprisingly algebraic geometry (read \cite{Survey1} and \cite{Survey2} for surveys of the notion).

Likewise, it is natural to consider the properties preserved by the convolution of two distributions, and in particular to consider the number of modes of the resulting distribution. It has been known for a long time that the convolution of two unimodal distributions need not be unimodal, though the convolution of two symmetric unimodal distributions will be unimodal (see \cite{DJDUnimodality}, or \cite{ProofAlternativeDiscrete} for an alternative proof of the second fact in the discrete case).

Further explorations have shown that convolutions of log-concave real distributions are log-concave (hence unimodal) (see \cite{ConvolutionLogConcave}), and that the convolution of a unimodal (but not symmetric) distribution with itself can have any number of modes (see \cite{ConvolutionAnyNumberModes}).
In the same line of questioning, we prove the following result:

\begin{thm}\label{TheoremDiscret}
Let $n\in\mathbb{N}$ be greater or equal to $1$. Then there exists a discrete log-concave distribution $p_n$  and a discrete bimodal distribution $q_n$, both symmetric about $0$, such that their convolution $p_n* q_n$ has exactly $n$ modes.
\end{thm}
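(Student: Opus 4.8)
The plan is to exhibit explicit pairs $(p_n,q_n)$ and to count the modes of $p_n*q_n$ by hand, exploiting the fact that log-concavity forces only a very weak variation-diminishing property. First I would record the elementary dictionary between modes and sign changes: for a distribution $h$ on $\Z$ symmetric about $0$, write $(\Delta h)(t)=h(t+1)-h(t)$ for the increment sequence; then (ignoring runs of zeros) $h$ has exactly $k$ modes if and only if $\Delta h$ changes sign exactly $2k-1$ times, so that $q$ is symmetric bimodal iff $\Delta q$ changes sign exactly three times, whereas a symmetric log-concave sequence changes sign zero or one time. Since $\Delta(p*q)=p*\Delta q$, the task reduces to producing a symmetric log-concave $p_n$ and a symmetric bimodal $q_n$ for which $p_n*\Delta q_n$ changes sign exactly $2n-1$ times. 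The key point is that a log-concave $p$ is a P\'olya frequency sequence of order $2$, hence variation-diminishing only on sequences having at most one sign change; on a three-sign-change sequence such as $\Delta q$ the convolution can, and for a well chosen $q$ does, create arbitrarily many new sign changes. This is precisely the gap that an argument of the kind in \cite{Faux} overlooks.

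For the construction I would take $p_n$ to be the uniform distribution on $\{-m_n,\dots,m_n\}$, which is symmetric and log-concave, so that up to the positive factor $\tfrac1{2m_n+1}$ one has $(p_n*q_n)(t+1)-(p_n*q_n)(t)=q_n(t+m_n+1)-q_n(t-m_n)$; using the symmetry $q_n(t-m_n)=q_n(m_n-t)$, counting the modes of $p_n*q_n$ becomes counting the sign changes over $t\in\Z$ of $D(t):=q_n\bigl(m_n+(t+1)\bigr)-q_n(m_n-t)$, i.e. the difference of the \emph{outer} and \emph{inner} slope values of the right hump of $q_n$ at almost equal depths from its peak. I would then design $q_n$ to be a symmetric distribution with two humps peaked at $\pm m_n$, each hump unimodal (so $q_n$ genuinely has two modes), but with each hump asymmetric: along the descending (outer) slope the values decrease in a ``fast/slow/fast/slow'' pattern on blocks of a fixed length $\ell_n$, along the ascending (inner) slope they also decrease away from the peak in such a pattern but offset by half a period, the two slopes having the same average rate of decrease. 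Then for $t$ ranging over $(0,m_n)$ the two monotone slopes compared by $D(t)$ are out of phase, so $D$ oscillates around zero once per block, producing on the order of $m_n/\ell_n$ modes; to these one adds the bounded number of modes coming from the centre, the two peaks and the tails, and tuning $\ell_n$ and $m_n$ yields exactly $n$. The cases $n=1$ (take $p_1$ a very wide uniform kernel, washing out the bimodality of $q_1$) and $n=2$ (take $p_2=\delta_0$) are immediate and handled separately.

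The crux, and the only genuinely delicate part, is the exact count: one must verify that the sign of $q_n(t+m_n+1)-q_n(t-m_n)$ changes neither more nor fewer than $2n-1$ times. This forces a simultaneous choice of the block length $\ell_n$, the peak separation $2m_n$, the fast and slow increments, and the hump profiles so that (i) within each block the ``fast'' decrease really overtakes the ``slow'' one, forcing a sign change, while no spurious sign changes appear between blocks; (ii) the two humps overlap enough for the oscillation to take place but not so much that $q_n$ itself acquires a third mode; and (iii) the contributions near the centre, near the peaks and in the tails sum to the correct finite correction. Carrying out these estimates is where essentially all the work lies; everything else is bookkeeping. The smooth analogue that contradicts \cite{Faux} is obtained by the same scheme, replacing the uniform kernel by a uniform density and the staircase humps by piecewise-linear symmetric bimodal densities, and repeating the estimate.
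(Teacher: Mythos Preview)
Your strategy is essentially the paper's: take $p_n$ uniform on a symmetric interval so that $\Delta(p_n*q_n)(m)$ is proportional to $q_n(m+m_n)-q_n(m-m_n-1)$, then engineer the inner and outer slopes of the right hump of $q_n$ to be out of phase so that this difference oscillates in sign. The paper executes this with $m_n=n-2$ and a completely explicit $\tilde q_n$ whose values on $\{0,\dots,2n-3\}$ are, for even $n$, the sequence $1,3,3,5,5,\dots,n-1,n-1,n,n-2,n-2,\dots,4,4,2,2$: the inner slope carries consecutive odd values in pairs, the outer slope consecutive even values in pairs, so the comparison $q_n(m+n-2)-q_n(n-1-m)$ is literally $\pm1$ for $m=1,\dots,n-1$ and strictly negative beyond. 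Thus the ``delicate'' exact count you flag collapses to a one-line check, no block-length parameter $\ell_n$ or tuning is needed, and the boundary contributions near the centre, peaks and tails take care of themselves automatically. Your outline is correct; the point worth absorbing from the paper is that the right explicit $q_n$ makes the bookkeeping trivial rather than merely feasible.
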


We also prove a continuous variant, which directly contradicts the main statement\footnote{The mistake can be found in the proof of Theorem 1 in \cite{Faux}: Equations (3) and (4) do not imply Equation (5).} in \cite{Faux}:

\begin{thm}\label{TheoremContinu}
Let $n\in\mathbb{N}$ be greater or equal to $1$. Then there exists an absolutely continuous distribution on $\R$ whose density function $f_n$ is smooth, symmetric about $0$ and log-concave (hence unimodal), and an absolutely continuous distribution on $\R$ whose density function $g_n$ is smooth, symmetric about $0$ and bimodal, such that the convolution $f_n* g_n$ has at least $n$ modes.

\end{thm}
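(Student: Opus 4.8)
The plan is to derive Theorem~\ref{TheoremContinu} from Theorem~\ref{TheoremDiscret} by smoothing. Fix $n$, take $p_n$ and $q_n$ as provided by Theorem~\ref{TheoremDiscret}, and set $h:=p_n*q_n$, a sequence with exactly $n$ modes; after a small generic symmetric perturbation we may assume $q_n(k)\neq q_n(k\pm 1)$ at the relevant indices. Let $\hat q_n$ be the piecewise-linear interpolant of $q_n$ and $\check p_n$ the piecewise-log-linear interpolant of $p_n$ (so $\log\check p_n$ is affine on each $[k,k+1]$, with $\check p_n(k)=p_n(k)$). Then $\check p_n$ is an even log-concave density — the slopes of $\log\check p_n$ are decreasing precisely because $p_n$ is log-concave — and $\hat q_n$ is an even bimodal density, because linear interpolation preserves the sign pattern $+,-,+,-$ of the forward differences of a symmetric bimodal sequence. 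Fix a smooth, even, compactly supported, log-concave bump $\phi$ (e.g.\ $\phi(x)=c\,e^{-1/(1-x^{2})}\mathbf{1}_{(-1,1)}$), put $\phi_\sigma(x)=\sigma^{-1}\phi(x/\sigma)$, and define $f_n:=\check p_n*\phi_\sigma$ and $g_n:=\hat q_n*\phi_\sigma$. By Pr\'ekopa's theorem $f_n$ is log-concave, hence unimodal; $f_n$ and $g_n$ are evidently smooth and even; and for $\sigma$ small $g_n$ remains bimodal, since $g_n'=\hat q_n'*\phi_\sigma$ converges in $L^1_{\mathrm{loc}}$ to the step function $\hat q_n'$, which has exactly three sign changes, so $g_n'$ has exactly three as well.

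It remains to see that $f_n*g_n$ has at least $n$ modes, and this is the crux. If $p_n$ could be taken \emph{concave} rather than merely log-concave (a non-negative concave sequence is log-concave, and one expects the construction behind Theorem~\ref{TheoremDiscret} to allow this, or to admit a concave variant), one may use piecewise-linear interpolation for $p_n$ too, and then $f_n*g_n = h*\Psi_\sigma$ exactly, where $\Psi_\sigma:=\Lambda*\Lambda*\phi_\sigma*\phi_\sigma$ ($\Lambda(x)=\max(1-|x|,0)$ the hat function) is a fixed, narrow, even, log-concave bump and $h$ is viewed as $\sum_m h(m)\,\delta_m$; that is, $f_n*g_n$ is a smoothed spline transform of the $n$-modal discrete distribution $h$. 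Using log-linear interpolation instead makes $f_n*g_n$ a small, uniformly controlled perturbation of such a transform, with no essential change. The obstacle is then that this smoothing can merge modes that lie too close together; I would handle it by taking $\sigma$ small enough that $\Psi_\sigma$ is concentrated near $0$ and showing, by a routine estimate, that $h*\Psi_\sigma$ has a strict local maximum in a neighbourhood of each mode of $h$ (at bottom this is the fact that convolving an alternating pattern $\ldots,a,b,a,b,\ldots$ with the heavily centre-weighted cubic B-spline returns again an alternating pattern) — and, to be safe, by arranging in Theorem~\ref{TheoremDiscret} that the $n$ modes of $h$ lie far apart with deep intervening valleys. This yields at least $n$ modes, and the fact that we need only preserve, not exactly reproduce, the modes is the slack afforded by the weaker conclusion "at least $n$".

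A self-contained alternative avoids Theorem~\ref{TheoremDiscret} altogether: build $f_n$ and $g_n$ directly by taking $g_n(x)=v(x-a)+v(-x-a)$ with $a$ large, where $v$ is a slightly smoothed finite positive combination of indicator functions of nested intervals $B_1\supset B_2\supset\cdots$. Such a combination is automatically unimodal — the value at a point depends monotonically on how deep into the nest the point lies — so $g_n$ is smooth, symmetric, and, being two translated copies with disjoint supports, bimodal. Grouping the $B_j$ into $\lceil n/2\rceil$ nearly concentric clusters with suitably chosen centres, and convolving with a sufficiently narrow symmetric log-concave $f_n$, makes each cluster contribute its own local maximum to $f_n*v$, so that $f_n*g_n$ — two reflected copies of $f_n*v$ placed near $\pm a$ — has at least $n$ modes. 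The delicate step there is choosing the weights of the $B_j$ so that each cluster's contribution genuinely protrudes as a local maximum above the broad bumps produced by the surrounding larger boxes.
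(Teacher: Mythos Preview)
Your overall strategy---derive the continuous theorem from the discrete one by smoothing---is exactly the paper's strategy. The difference, and the gap, is in how the smoothing is executed.

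Your main route writes $f_n*g_n = h^{\delta}*\Psi_\sigma$ with $\Psi_\sigma=\Lambda*\Lambda*\phi_\sigma*\phi_\sigma$, and then proposes to take $\sigma$ small so that $\Psi_\sigma$ is ``concentrated near $0$''. But $\Psi_\sigma\to\Lambda*\Lambda$ as $\sigma\to 0$, and $\Lambda*\Lambda$ (the cubic B--spline) has support $[-2,2]$: its width is fixed, not shrinking. In the specific construction behind Theorem~\ref{TheoremDiscret}, the modes of $h=p_n*q_n$ sit at consecutive odd (or even) integers, i.e.\ at mutual distance $2$, so your kernel is genuinely wider than the mode spacing and the ``routine estimate'' is not routine. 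Your cubic--B--spline remark (an alternating sequence stays alternating after $(1/6,4/6,1/6)$ averaging) handles the periodic bulk, but you still owe the boundary analysis where the alternation meets the monotone tails; and your fallback (``arrange the modes of $h$ to lie far apart'') would work but requires reopening Theorem~\ref{TheoremDiscret}, which you have not done. A minor additional wrinkle: the log--linear interpolant $\check p_n$ is ill-defined at the edge of the support where $p_n$ hits $0$. (Incidentally, in the paper's construction $p_n$ is a discrete uniform, which \emph{is} concave, so your conditional ``if $p_n$ could be taken concave'' is in fact satisfied---you could simply use linear interpolation for $p_n$ too.)

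The paper sidesteps the kernel--width problem by approximating not the interpolants but the \emph{step functions} $\Phi(p_n)=\sum_k p_n(k)\,\mathbf{1}_{(k-1/2,\,k+1/2]}$ and $\Phi(q_n)$. The point is the identity $\Phi(p_n)*_{\R}\Phi(q_n)=$ (piecewise linear interpolant of $p_n*_{\Z}q_n$): the effective kernel is $\mathbf{1}_{[-1/2,1/2]}*\mathbf{1}_{[-1/2,1/2]}=\Lambda$, supported on $[-1,1]$, so the piecewise--linear limit already has exactly the $n$ modes of $p_n*q_n$. The paper then exhibits smooth $f_n^N\to\Phi(p_n)$ and $g_n^N\to\Phi(q_n)$ (for $f_n^N$ it uses the explicit log--concave family $c_N\exp\bigl(-(x/r)^{2N}\bigr)$ rather than your Pr\'ekopa argument; your route via Pr\'ekopa is arguably cleaner), shows $f_n^N*g_n^N\to\Phi(p_n)*\Phi(q_n)$ uniformly, and reads off the alternating pattern of values at integers for large $N$; Rolle then gives at least $n$ local maxima. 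Your second, self--contained approach is an interesting sketch but, as written, is too schematic to count as a proof.
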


\begin{remark}
In fact, we can ask that $f_n* g_n$ have exactly $n$ modes; proving it makes the demonstration much more tedious, yet not any deeper.
\end{remark}

In what follows, the reader is reminded of the main definitions in Section \ref{SectionDefinitions}. Theorem \ref{TheoremDiscret} is proved in Section \ref{SectionDiscreteCase}, and Theorem \ref{TheoremContinu} is proved in Section \ref{SectionContinuousCase}.
\section{Definitions}\label{SectionDefinitions}
In this note, we will consider absolutely continuous distributions on $\R$ with continuous density functions and discrete distributions on $\Z$.

The \textit{modes} of an absolutely continuous distribution with continuous density function $f$  are the local maxima of $f$.

We will say that an absolutely continuous distribution with continuous density function $f$ is \textit{strictly $n$-modal in $\{a_1, \ldots, a_n \}$} if there exists $a_1<b_1<a_2< \ldots<b_{n-1}<a_n \in \R$ such that $f$ is strictly increasing on $[-\infty,a_1]$ and $[b_i,a_{i+1}]$ for $i=1,\ldots,n-1$, and strictly decreasing on $[a_{n},\infty]$ and $[a_i,b_i]$ for $i=1,\ldots,n-1$.  

We will say that a discrete distribution with probability mass function $p$ has a mode in $\{m, m +1,\ldots,m+k\}\subset \Z$ if $p(m-1)< p(m)=p(m+1)=\ldots=p(m+k)> p(m+k+1)$ .
We say that it is $n$-modal if it has exactly $n$ modes.

In the discrete case, a mode is often required to be a global maximum of the mass function. This nuance in definition matters not, as all our discrete modes will be global maxima.

The density function $f$ of an absolutely continuous distribution is called \textit{log-concave} if $f(\alpha x+(1-\alpha)y)\geq f(x)^\alpha f(y)^{1-\alpha} $ for all $x,y\in \R$ and all $\alpha \in [0,1]$. If $f$ is strictly positive, it is equivalent the concavity of $\log\circ f$.

The mass function $p$ of a discrete distribution is called \textit{log-concave} if $p(m)^2\geq p(m-1)p(m+1)$ for all $m\in \Z$, and if its support is a contiguous interval, \textit{i.e.} if there exists $m_1,m_2\in \Z$ such that $m_1<m_2$, $p(m) = 0 $ for all $m\leq m_1$ and all $m\geq m_2$, and $p(m)>0$ for all $m_1<m<m_2$. The second condition is sometimes omitted.

\begin{remark}
It is easy to show that log-concavity implies unimodality both in the continuous and the discrete case.
\end{remark}

As usual, the convolution of density functions $f,g:\R \longrightarrow \R$ (and by extension the convolution of the two associated absolutely continous distributions) is defined  as
$$ f* g (x) = \int _\R f(t)g(x-t)dt $$
for all $x\in \R$.

Similarly, the convolution of mass functions $p,q: \Z \longrightarrow [0,1] $ (and by extension the convolution of the two associated discrete distributions) is defined  as
$$ p* q (m) = \sum _{k\in \Z} p(k)q(m-k) $$
for all $m \in \Z$.

\section{Discrete case}\label{SectionDiscreteCase}

\begin{proof}[Proof of Theorem \ref{TheoremDiscret}]

The case $n=1$ is almost trivial: consider $p_1:=\frac{1_{\{-1,0,1 \}}}{3}$ and $q_1:=\frac{1_{\{-1,1 \}}}{2}$, where $1_A: \Z \longrightarrow \{0,1\}$ refers to the indicator function of $A$ for any $A\subset \Z$.

Now for $n\geq 2$, let us define $\tilde{p_n}=1_{\{-n+2,\ldots,0,\ldots,n-2 \}}$ and $p_n=\tilde{p_n}\cdot\frac{1}{2n-3}$. The function $p_n$ is clearly a log-concave distribution and symmetric about $0$.

If $n$ is even, let $\tilde{q_n}$ be as such:
for $k=1,\ldots,\frac{n}{2}-1$, we let $\tilde{q_n}(2k-1)=\tilde{q_n}(2k)=2k+1$ and $\tilde{q_n}(2n-2-2k)=\tilde{q_n}(2n-1-2k)=2k$.
We also let $\tilde{q_n}(n-1)=n$, $\tilde{q_n}(0)=1$ and $\tilde{q_n}(k)$ be $0$ for any $k>2n-3$.
We define $\tilde{q_n}$ on $\Z _{\leq 0}$ symmetrically.

Let $C_n:=\sum_{m\in \Z} \tilde{q_n}(m)$ and $q_n:=\frac{\tilde{q_n}}{C_n}$. Then $q_n$ is symmetric about $0$, and it is a bimodal distribution whose modes are in $\{-n+1,n-1\}$. It has a minimum in $0$.

See Figure \ref{DiscreteCase6Figure} to see the case $n=6$ illustrated.

\begin{figure}
\includegraphics[scale=0.65]{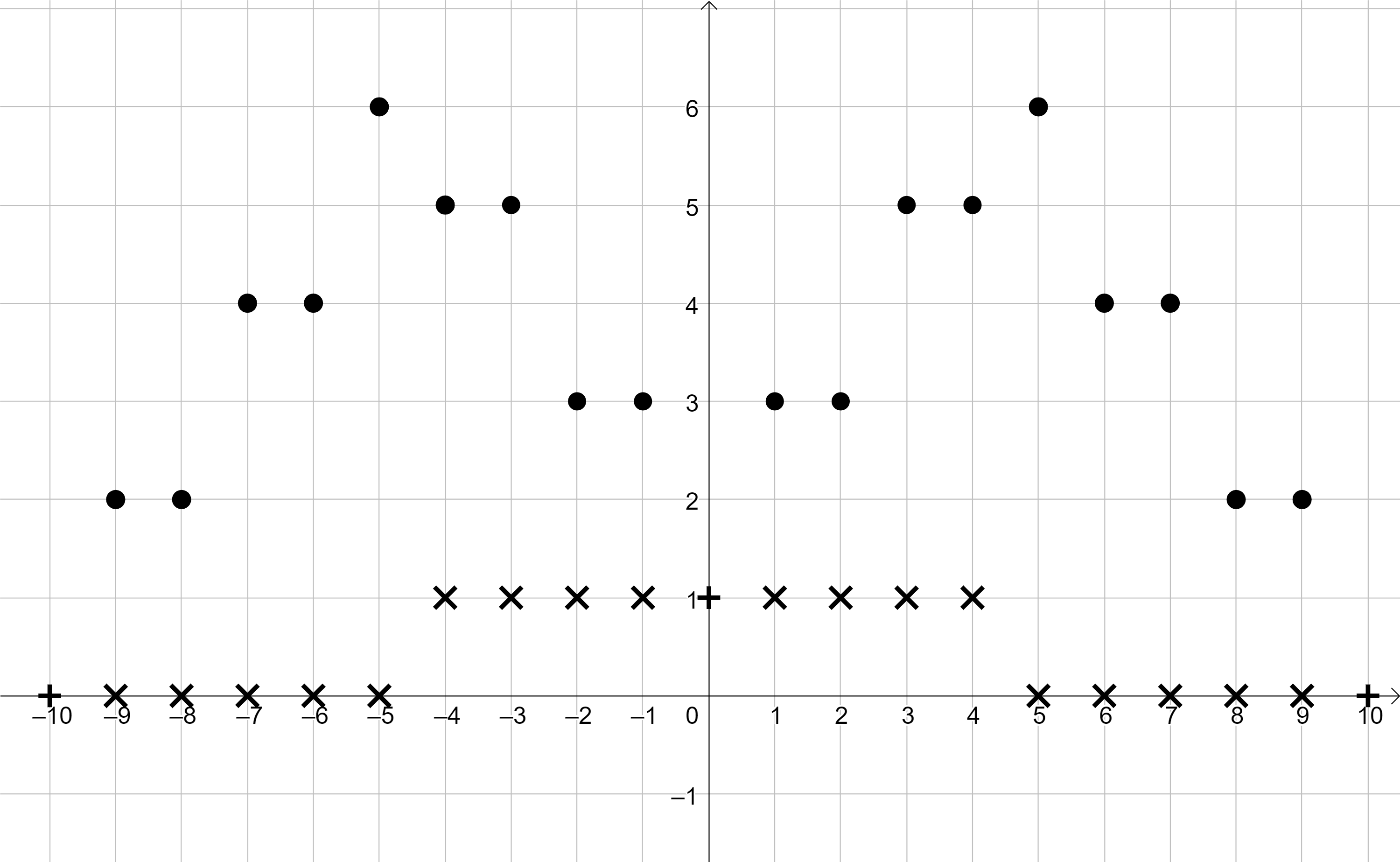}
\caption{
Case $n=6$. The $\bullet$ correspond to $\tilde{q_6}$, the $\times$ to $\tilde{p_6}$ and the $+$ to where they take the same value. }
\label{DiscreteCase6Figure}
\end{figure}

The convolution $p_n* q_n$ is clearly symmetric about $0$.

For any $m\in \Z$, the finite difference satisfies $D(p_n* q_n)(m)=(p_n* q_n)(m) -(p_n * q_n)(m-1)=(D(p_n)* q_n) (m)$. As $D(p_n)(l)$ is equal to $\frac{1}{2n-3} $ if $l=-n+2$, to $\frac{-1}{2n-3} $ if $l=n-1$ and to $0$ otherwise, we get that $D(p_n* q_n)(m)=\frac{q_n(m+n-2)-q_n(m-n+1)}{2n-3}$.

We see that $D(p_n* q_n)(m)$ is $0$ if $m>(2n-3)+(n-1)=3n-4$ and is strictly negative if $3n-4\geq m>n-1$. For $m=1, \ldots,n-1$, the finite difference $D(p_n* q_n)(m)$ is equal to $\frac{1}{(2n-3)C_n}$ if $m$ is odd and to $\frac{-1}{(2n-3)C_n}$ if $m$ is even.

By symmetry of $p_n$ and $q_n$, if  $m\leq 0$, we have that
\begin{gather*}
  D(p_n* q_n)(m)=\frac{q_n(m+n-2)-q_n(m-n+1)}{2n-3}=\\ -\frac{q_n(-m+n-1)-q_n(-m-n+2)}{2n-3} =\\ -\frac{q_n(-(m-1)+n-2)-q_n(-(m-1)-n+1)}{2n-3}=-D(p_n* q_n)(-(m-1)).  
\end{gather*}

From this, we get that $p_n* q_n$ has exactly $n$ modes, located in $-n+1, \ldots,-3,-1,1, 3,\ldots,n-1$.

\begin{figure}
\includegraphics[scale=0.55]{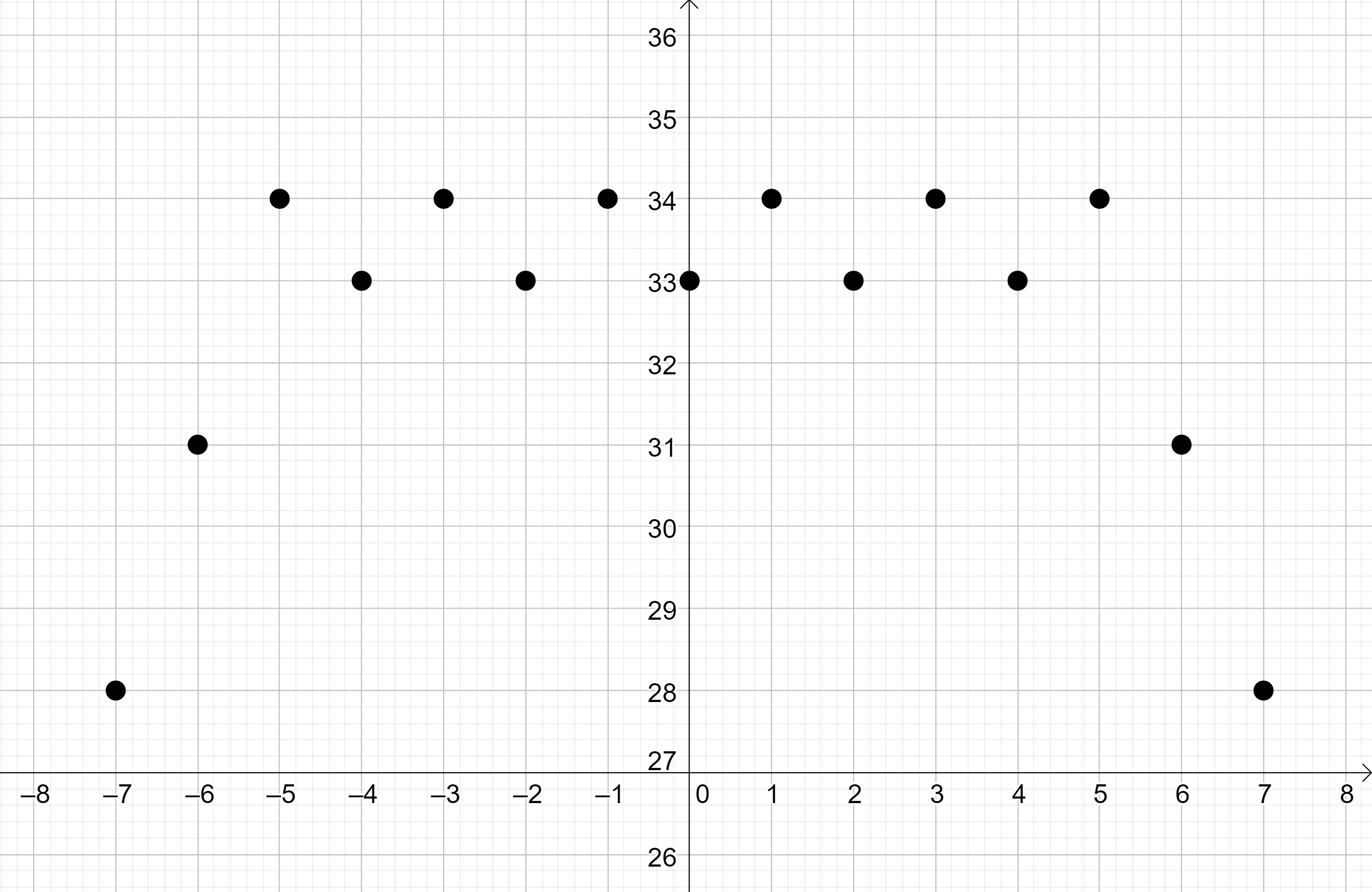}
\caption{
The convolution product $\tilde{p_n}*\tilde{q_n}$ in the case $n=6$.}
\label{ResultatN6Figure}
\end{figure}

The convolution $\tilde{p_n}* \tilde{q_n}$ is illustrated in Figure \ref{ResultatN6Figure} in the case $n=6$ (remember that $p_n* q_n = \frac{\tilde{p_n}* \tilde{q_n} }{(2n-3)C_n}$).

The case where $n$ is odd (and strictly greater than $1$) is very similar. Let $\tilde{q_n}$ be as such:
for $k=1,\ldots,\frac{n-1}{2}-1$ we let $\tilde{q_n}(2k-1)=\tilde{q_n}(2k)=2k+1$, and for $k=1,\ldots,\frac{n-1}{2}$ we let  $\tilde{q_n}(2n-2-2k)=\tilde{q_n}(2n-1-2k)=2k$.
We also let $\tilde{q_n}(n-2)=n$, $\tilde{q_n}(0)=1$ and $\tilde{q_n}(k)$ be $0$ for any $k>2n-3$.

As before, we define $\tilde{q_n}$ on $\Z _{\leq 0}$ symmetrically, let $C_n:=\sum_{m\in \Z} \tilde{q_n}(m)$ and $q_n:=\frac{\tilde{q_n}}{C_n}$. Then $q_n$ is symmetric about $0$, and it is a bimodal distribution whose modes are in $\{-n+2,n-2\}$. It has a minimum in $0$.

The case $n=7$ is illustrated in Figure \ref{DiscreteCase7Figure}.

As above, $D(p_n* q_n)(m)= \frac{q_n(m+n-2)-q_n(m-n+1)}{2n-3}$. Thus we see that $D(p_n* q_n)(m)$ is $0$ if $m>(2n-3)+(n-1)=3n-4$ and is strictly negative if $3n-4\geq m>n-1$. For $m=1, \ldots,n-1$, the finite difference $D(p_n* q_n)(m)$ is equal to $\frac{1}{(2n-3)C_n}$ if $m$ is even and to $\frac{-1}{(2n-3)C_n}$ if $m$ is odd.

By symmetry, $D(p_n* q_n)(m) = -D(p_n* q_n)(-(m-1))$ if $ m\leq 0$.

From this, we get that $p_n* q_n$ has exactly $n$ modes, located in $-n+1, \ldots,-2,0, 2,\ldots,n-1$.
The proof is complete.

\begin{figure}
\includegraphics[scale=0.55]{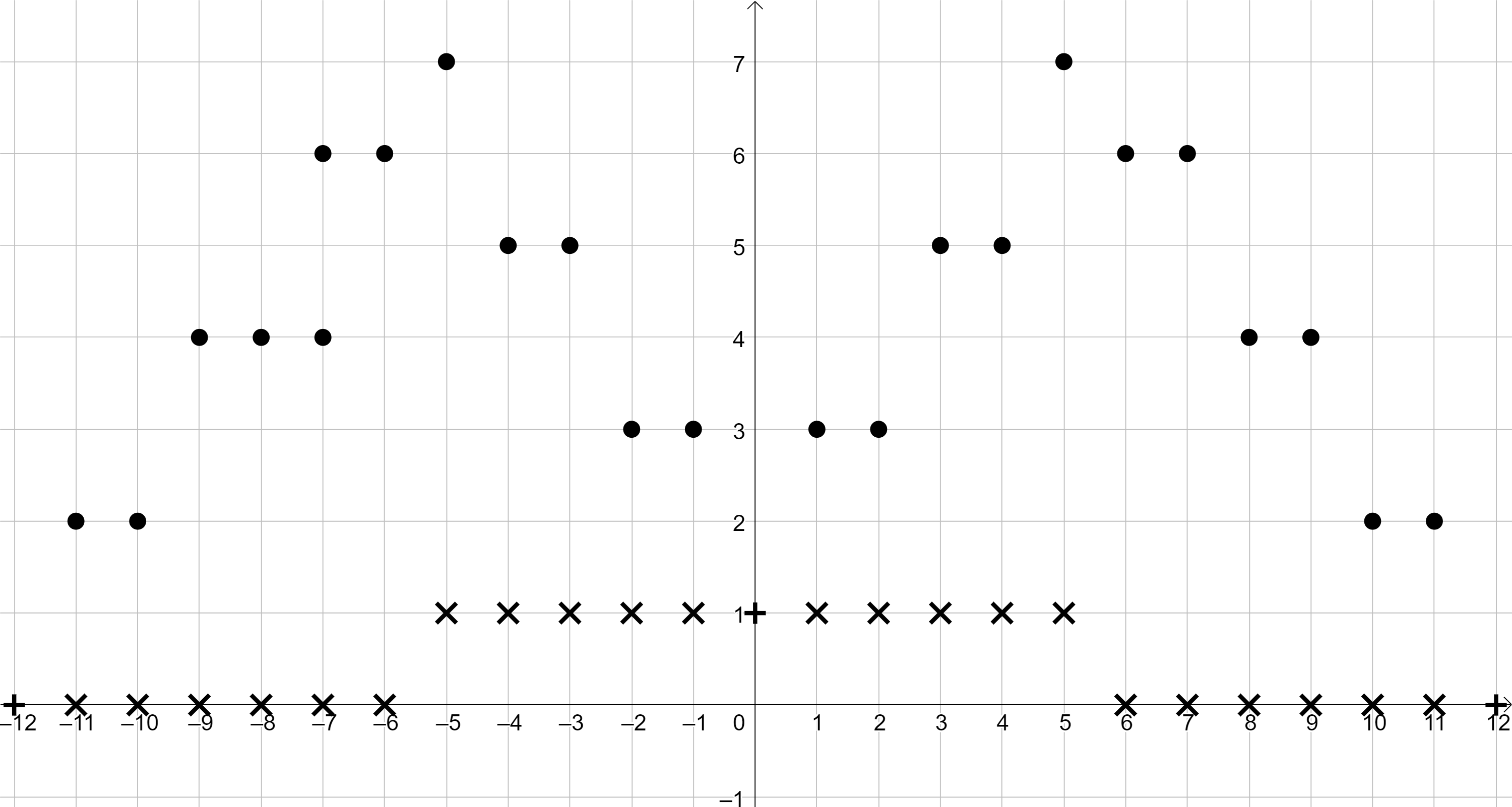}
\caption{
Case $n=7$. The $\bullet$ correspond to $\tilde{q_7}$, the $\times$ to $\tilde{p_7}$ and the $+$ to where they take the same value. }
\label{DiscreteCase7Figure}
\end{figure}

\end{proof}

\begin{remark}
Note that if we wanted $p_n$ to be such that there is a strict global maximum in $0$, we could replace $\tilde{p_n}$ in the proof of Theorem \ref{TheoremDiscret} by the restriction to $\Z$ of $x\mapsto exp\left(-\left[\frac{x}{n-2+\frac{1}{2}}\right]^{2i}\right)$ for $i\in \mathbb{N}$ large enough, and then proceed as above.
\end{remark}
\section{Continuous case}\label{SectionContinuousCase}

We come up with a smooth variant of the construction used in Section \ref{SectionDiscreteCase}.

Given a function $p:\Z \longrightarrow \Z$, we define $$\Phi (p) := \sum _{k\in \Z} p(k) 1_{]k-\frac{1}{2},k+\frac{1}{2}]}:\R \longrightarrow\R.$$ Note that $\Phi(p)|\Z =p$.

\begin{lemma}\label{TechnicalLemma1}
Let $p,q:\Z \longrightarrow \Z$ be two functions with finite support.

Then the convolution $\Phi(p) *_{\R} \Phi(q)$ over $\R$ (where $ \Phi(p)*_\R \Phi(q) (x) = \int _\R \Phi(p)(t)\Phi(q)(x-t)dt $)  is a continuous piecewise affine function which is affine on each interval $[l,l+1]$ for $l\in \Z$. Moreover, the convolution $p  *_{\Z} q$  over $\Z$ (where $  p*_\Z q (m) = \sum _{k\in \Z} p (k)q (m-k) $)  coincides with the restriction to $\Z$ of $\Phi(p) *_{\R} \Phi(q)$:
$$(\Phi(p) *_{\R} \Phi(q)) |\Z  = p  *_{\Z} q .$$
In particular, $p  *_{\Z} q$ and $\Phi(p) *_{\R} \Phi(q)$ have the same modes.
\end{lemma}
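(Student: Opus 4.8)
The plan is to compute $\Phi(p) *_\R \Phi(q)$ directly from the definition and recognize it as a piecewise affine function. First I would write $\Phi(p) = \sum_k p(k)\,\mathbf{1}_{]k-1/2,k+1/2]}$ and $\Phi(q) = \sum_l q(l)\,\mathbf{1}_{]l-1/2,l+1/2]}$, both finite sums since $p,q$ have finite support. By bilinearity of convolution,
\[
\Phi(p) *_\R \Phi(q) = \sum_{k,l} p(k)q(l)\, \bigl(\mathbf{1}_{]k-1/2,k+1/2]} *_\R \mathbf{1}_{]l-1/2,l+1/2]}\bigr),
\]
so the whole problem reduces to understanding the convolution of two unit-length indicator functions. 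A direct calculation shows that $\mathbf{1}_{]-1/2,1/2]} *_\R \mathbf{1}_{]-1/2,1/2]}(x)$ is the ``tent function'' $\tau(x) = \max(0, 1-|x|)$, and hence $\mathbf{1}_{]k-1/2,k+1/2]} *_\R \mathbf{1}_{]l-1/2,l+1/2]}(x) = \tau(x - (k+l))$. This is continuous, piecewise affine, affine on every interval $[m,m+1]$ with $m\in\Z$, and takes the value $1$ at $x = k+l$, the value $0$ at all other integers, and is $0$ outside $[k+l-1,k+l+1]$. A finite sum of such translated tent functions inherits all these structural properties (continuity, and affineness on each $[m,m+1]$), which establishes the first assertion.

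Next I would evaluate the restriction to $\Z$. Since $\tau(j) = \mathbf{1}_{\{j=0\}}$ for $j\in\Z$, we get for any $m\in\Z$
\[
(\Phi(p) *_\R \Phi(q))(m) = \sum_{k,l} p(k)q(l)\,\tau(m-k-l) = \sum_{k,l : k+l = m} p(k)q(l) = \sum_{k} p(k)q(m-k) = (p *_\Z q)(m),
\]
which is exactly the claimed identity $(\Phi(p) *_\R \Phi(q))|\Z = p *_\Z q$.

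Finally, for the statement about modes: a continuous piecewise affine function that is affine on each $[m,m+1]$ attains its local maxima only at integer points (or on intervals of constant value bounded by integers where the function goes up before and down after), so its local maxima coincide exactly with the local maxima of its restriction to $\Z$ — which is precisely what it means for $p *_\Z q$ and $\Phi(p) *_\R \Phi(q)$ to have the same modes. I would make this precise by noting that on $[m,m+1]$ the function is monotone (being affine), so any local max in the interior of such an interval would force the function to be locally constant there, and even then the max is witnessed at the integer endpoints.

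The only mildly delicate point is the bookkeeping with the half-open intervals $]k-1/2,k+1/2]$: one must check that $\mathbf{1}_{]-1/2,1/2]} *_\R \mathbf{1}_{]-1/2,1/2]}$ really is the continuous tent function $\tau$ and not something with a jump (it is continuous, because convolution with an $L^1$ function smooths, and the measure-zero ambiguity at the endpoints is irrelevant to the integral). Everything else is a routine computation, so I expect no substantive obstacle; the ``hard part'' is merely stating the elementary facts about tent functions cleanly enough that the modes claim follows without further argument.
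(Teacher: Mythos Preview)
Your proof is correct; the paper itself does not give a proof, merely stating that it is ``straightforward'', and your tent-function decomposition is exactly the natural way to make the straightforward computation explicit.
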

The proof of Lemma \ref{TechnicalLemma1} is straightforward.

Denote by $||- ||_\infty: f\mapsto ||f ||_\infty = sup_{x\in \R} (|f(x)|)$ the uniform norm. We need the following Lemma, which is easy to prove using classical smooth approximation tricks:

\begin{lemma}\label{TechnicalLemma2}
For any $n\in \N$, there exists a family of smooth symmetric about $0$ density functions $\{g_n^N\}_{N\in\N}$ such that:

\begin{enumerate}
    \item As $N$ goes to $\infty$, $g_n^N$ converges to $\Phi(q_n)$ in norm $L_1$.
    \item There exists $M>0$ such $||g_n^N||_\infty \leq M$ for all $N$.
    \item If $n$ is even (respectively, odd and strictly greater than $1$),  $g_n^N$ is strictly increasing from $-\infty$ to $-n+1$ (respectively $-n+2$) and strictly decreasing from $-n+1$ (respectively $-n+2$) to $0$ (and symmetrically so on $\R_+$) for all $N$.
    In particular, $g_n^N$ is bimodal.
    \item If $n=1$, $g_1^N$ is strictly increasing from $-\infty$ to $-1$  and strictly decreasing from $-1$ to $0$ (and symmetrically so on $\R_+$) for all $N$. In particular, $g_1^N$ is bimodal.

\end{enumerate}

\end{lemma}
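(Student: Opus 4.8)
The plan is to obtain each $g_n^N$ by smoothing a continuous, piecewise‑affine stand‑in for $\Phi(q_n)$ that already has the exact monotonicity pattern required in (3)--(4). Write $a$ for the prescribed position of the left mode ($a=-n+1$ if $n$ is even, $a=-n+2$ if $n$ is odd and $>1$, $a=-1$ if $n=1$); by the construction of $q_n$ in the proof of Theorem~\ref{TheoremDiscret}, $\Phi(q_n)$ is an even step function, weakly increasing on $(-\infty,a]$, weakly decreasing on $[a,0]$, with a single maximal step covering $a$. The three steps are: (1) build an even, continuous, piecewise‑affine probability density $\phi_n^N$ which is strictly increasing on $(-\infty,a]$, strictly decreasing on $[a,0]$, is $L^1$‑close to $\Phi(q_n)$, has uniformly bounded sup norm, and has total variation bounded uniformly in $N$; (2) set $g_n^N:=\phi_n^N*\rho_{\eta_N}$ for a fixed smooth even mollifier $\rho$ (say supported in $[-1,1]$, $\int\rho=1$, $\rho_\eta(x)=\eta^{-1}\rho(x/\eta)$) and a window $\eta_N:=1/N^2$ much smaller than the features of $\phi_n^N$; (3) check that this smoothing preserves all the required properties.

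For Step~1 I would construct $\phi_n^N$ by: tilting each maximal plateau of $\Phi(q_n)$ into a ramp of tiny positive slope, the total rise over a plateau being $\gamma_N:=1/N$, which is less than the (fixed, positive) minimal gap between consecutive distinct step‑heights of $\Phi(q_n)$; joining consecutive ramps by steep affine transitions inside windows of width $1/N$ around the jump points of $\Phi(q_n)$ — possible while keeping the function strictly increasing on $(-\infty,a)$ precisely because the distinct step‑heights are strictly increasing there; replacing the top step around $a$ by a \emph{corner cap}, i.e. an affine rise of slope $+\sigma_N$ on $[a-\tfrac1N,a]$ followed by an affine fall of slope $-\sigma_N$ on $[a,a+\tfrac1N]$, so that $\phi_n^N$ is locally even about $a$; inserting in the tail, where $\Phi(q_n)$ vanishes, a small strictly monotone wisp of $L^1$‑mass $\le 1/N$, so that $\phi_n^N>0$ everywhere, tends to $0$ at $\pm\infty$, and is still strictly increasing on $(-\infty,a]$; doing everything symmetrically on $\R_+$; and rescaling by $1+O(1/N)$ so that $\int\phi_n^N=1$. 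By construction $\phi_n^N$ is even, strictly increasing on $(-\infty,a]$, strictly decreasing on $[a,0]$, $\|\phi_n^N-\Phi(q_n)\|_{L^1}=O_n(1/N)$, $\|\phi_n^N\|_\infty\le\|\Phi(q_n)\|_\infty+1$ for $N$ large, and — being monotone on each of a bounded number of intervals — $\TV(\phi_n^N)\le K_n$ for a constant $K_n$ independent of $N$.

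Steps~2--3 are then routine. The function $g_n^N=\phi_n^N*\rho_{\eta_N}$ is smooth, even, nonnegative with integral $1$, and $\|g_n^N\|_\infty\le\|\phi_n^N\|_\infty\|\rho_{\eta_N}\|_1=\|\phi_n^N\|_\infty\le\|\Phi(q_n)\|_\infty+1=:M$, giving (2). For (1), $\|g_n^N-\Phi(q_n)\|_{L^1}\le\|\phi_n^N*\rho_{\eta_N}-\phi_n^N\|_{L^1}+\|\phi_n^N-\Phi(q_n)\|_{L^1}\le\eta_N\,\TV(\phi_n^N)+O_n(1/N)\le K_n/N^2+O_n(1/N)\to 0$. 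For (3)--(4): an even mollifier reproduces affine functions, so $g_n^N$ coincides with $\phi_n^N$ on every maximal affine piece at distance $>\eta_N$ from its breakpoints, and near a breakpoint with two adjacent positive slopes one has $(g_n^N)'=(\phi_n^N)'*\rho_{\eta_N}>0$; since $(\phi_n^N)'>0$ a.e. on $(-\infty,a)$, this gives $(g_n^N)'>0$ on $(-\infty,a-\eta_N]$. On $(a-\eta_N,a)$, where $(\phi_n^N)'$ equals $+\sigma_N$ left of $a$ and $-\sigma_N$ right of $a$, a one‑line computation using that $\rho_{\eta_N}$ is even gives $(g_n^N)'(x)=2\sigma_N\int_{x-a}^{0}\rho_{\eta_N}(t)\,dt>0$ for $a-\eta_N<x<a$, and $(g_n^N)'(a)=0$; by local evenness about $a$, $g_n^N$ is strictly decreasing just right of $a$, and then strictly decreasing on all of $[a,0]$ because $(\phi_n^N)'<0$ a.e. there. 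Evenness handles $\R_+$, and $g_n^N$ has no other critical points, so it is bimodal with modes exactly at $\pm a$. (Re‑indexing by $N\ge N_0$ discards the finitely many small $N$ for which the above inequalities are not yet valid.)

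The main obstacle is Step~1 and the sign check near the peak: $\phi_n^N$ must be $L^1$‑close to $\Phi(q_n)$ — which forces the transitions and tilts to shrink with $N$ — yet be honestly strictly monotone with the mode pinned at $a$ and with total variation bounded uniformly in $N$; one must then take the mollification scale $\eta_N$ small compared to the transition scale $1/N$, and arrange the top of $\phi_n^N$ as a \emph{locally even} corner cap, so that convolution with an even kernel leaves the derivative strictly positive up to $a$, exactly zero at $a$, and strictly negative just after. Everything away from the peak reduces to the observation that mollification reproduces affine pieces and averages positive slopes to positive values; the only genuinely non‑formal point is the sign computation at the cap.
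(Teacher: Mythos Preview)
Your argument is correct and takes a genuinely different route from the paper's. The paper perturbs the step heights of $\tilde q_n$ by alternating $\pm 1/N$ (to break all plateaus) and then joins the perturbed heights directly with an explicit smooth sigmoid $h_N(x)=\bigl(1+\exp\bigl(\tfrac{Nx}{x^2-1}\bigr)\bigr)^{-1}$ on each integer interval; no intermediate piecewise-affine function and no mollifier appear. Your two-step scheme (piecewise-affine model $\phi_n^N$ with tilts, narrow transitions and a locally even corner cap, followed by convolution with an even mollifier at scale $\eta_N\ll 1/N$) is a bit more modular: the sign of $(g_n^N)'$ reduces cleanly to the sign of the piecewise-constant $(\phi_n^N)'$, and your explicit computation $(g_n^N)'(x)=2\sigma_N\int_{x-a}^{0}\rho_{\eta_N}$ pins the unique critical point at $a$. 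Your ``wisp'' in the tail even makes $g_n^N$ strictly increasing on all of $(-\infty,a]$, whereas the paper's construction is identically zero outside $[-2n+2,2n-2]$ and hence only weakly increasing there. Two small points worth tightening in your write-up: (i) your description of $\phi_n^N$ explicitly covers $(-\infty,a]$ and then invokes symmetry on $\R_+$, but the segment $[a,0]$ lies on $\R_-$ and is not covered by either; you clearly intend the obvious mirror construction (downward tilts and downward transitions) there, and it works, but say so; (ii) the local evenness of the corner cap forces $\phi_n^N(a-1/N)=\phi_n^N(a+1/N)$, so you should note that the adjacent tilts/transitions on the two sides are chosen to meet this common value---there is enough freedom to do so, but it is not automatic from the rest of your description.
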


\begin{proof}[Proof of Lemma \ref{TechnicalLemma2}]

We assume that $n\geq 2$ - the case $n=1$ is similar.
Given $N\in \N$, one can for example consider the function
$$h_N:=x\mapsto \frac{1}{1+\exp(\frac{Nx}{x^2-1})}1_{\{-1< x < 1\}} + 1_{\{x\geq 1\}},$$
illustrated in Figure \ref{FigureStepFunction}, which is smooth and approximates the Heavyside step function as $N\rightarrow \infty$.

\begin{figure}
\includegraphics[scale=3.5]{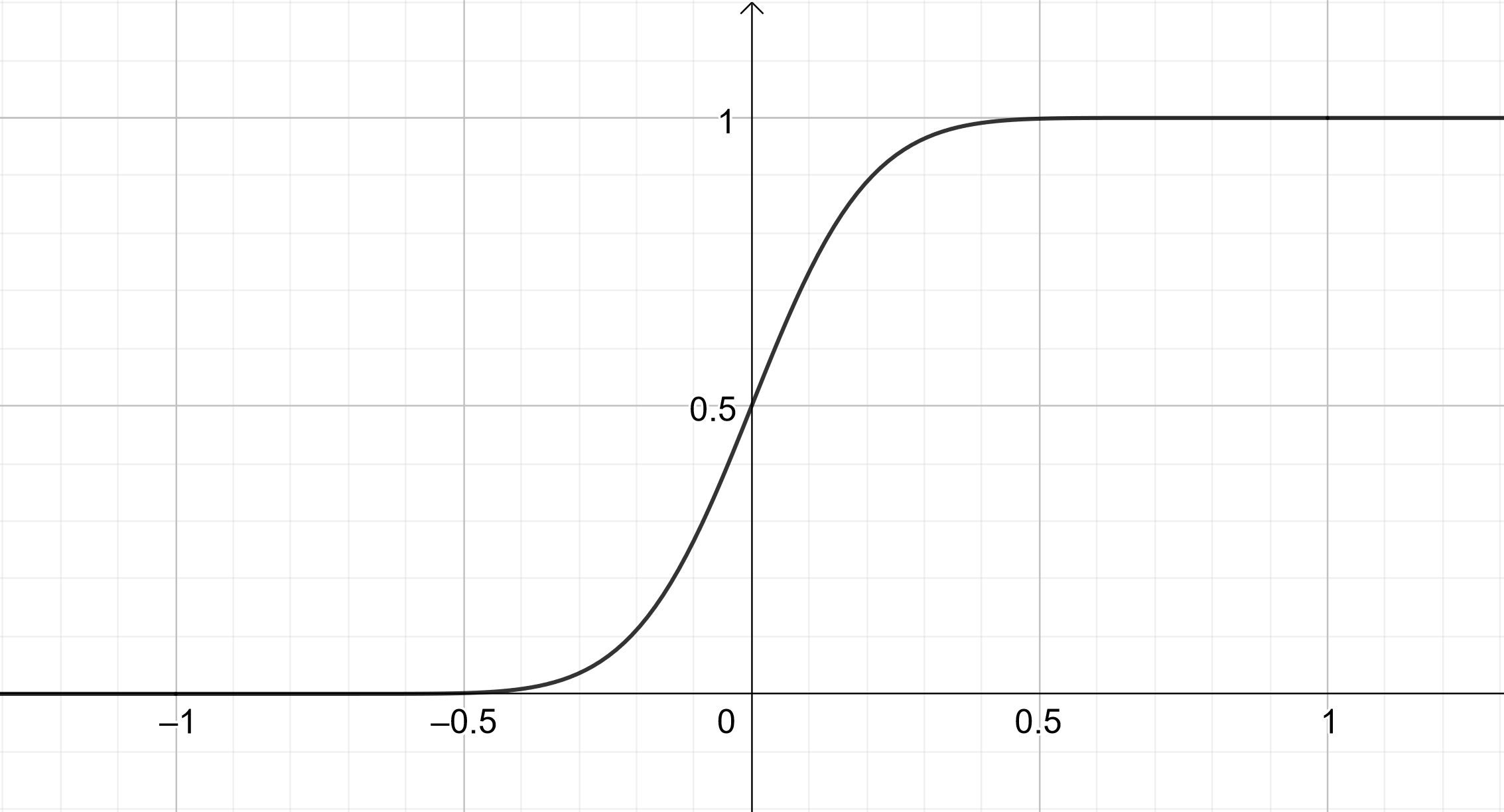}
\caption{The graph of $h_N:x \mapsto \frac{1}{1+\exp(\frac{Nx}{x^2-1})}1_{\{-1< x < 1\}} + 1_{\{x\geq1\}}$ for $N=10$.
}
\label{FigureStepFunction}
\end{figure}

Let also $b_N:\Z \rightarrow \R$ be such that $b_N(k)=0$ if $k\not \in \{-2n+3,\ldots,2n-3\}$, that $b_N(k)=\frac{1}{N}$ if $k\in \{-2n+3,\ldots,2n-3\}$ is even, and $b_N(k)=-\frac{1}{N}$ if $k\in \{-2n+3,\ldots,2n-3\}$  is odd.
Then for any $n\geq 2$  and $N\in \N$, one can define a smooth and symmetric about $0$ function $\tilde{g}_n^N$ as follows: let $\tilde{g}_n^N(x)=0$ for any $x<-2n+2$ or $x>2n-2$, and let
$$\tilde{g}_n^N(x)=\left(\tilde{q}_n(k+1)+b_N(k+1)-\tilde{q}_n(k)-b_N(k) \right)\cdot h_N(2(x-k-0.5))+\tilde{q}_n(k)+b_N(k)$$
for $x\in[k,k+1]$ and $k\in\{-2n+2,2n-2\}$, where $\tilde{q}_n$ is as in the proof of Theorem \ref{TheoremDiscret}. The case $n=5$ and $N=10$ is illustrated in Figure \ref{FigureSmoothN5}.
Then $\tilde{g}_n^N$ converges to $\Phi(\tilde{q}_n)$ in norm $L^1$ as $N\rightarrow \infty$, and the family of functions
$$g_n^N:= \frac{\tilde{g}_n^N}{\int_\R \tilde{g}_n^N(x) dx}$$
satisfies conditions (1) to (4).

\begin{figure}
\includegraphics[scale=0.75]{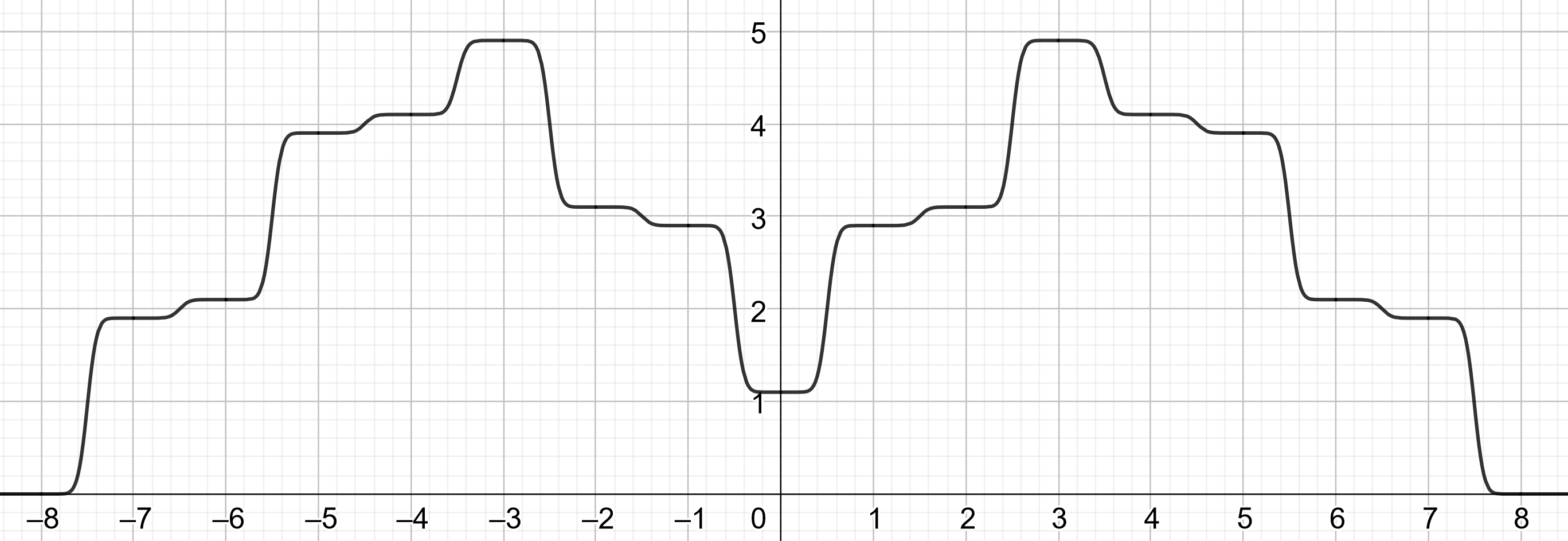}

\caption{The graph of $\tilde{g}_n^N$, which serves as a smooth approximation of $\Phi(\tilde{q}_n)$, for $n=5$ and $N=10$.
}
\label{FigureSmoothN5}
\end{figure}

\end{proof}
We can now prove Theorem \ref{TheoremContinu}.
\begin{proof}[Proof of Theorem \ref{TheoremContinu}]

Consider the mass functions $p_n$ and $q_n$ defined in the proof of Theorem \ref{TheoremDiscret}.

For any $N\in\N_{>0}$, let $f_n^N$ be defined by
$$f_n^N(x)=\frac{    exp(-(\frac{x}{  n-2+\frac{1}{2}  }  )^{2N})      } {\int_\R exp(-(\frac{y}{  n-2+\frac{1}{2}  }  )^{2N})  dy }$$
if $n\geq 2$ and 
$$ f_1^N(x)=\frac{    exp(-(\frac{2x}{3  }  )^{2N})      } {\int_\R exp(-(\frac{2y}{  3 }  )^{2N})  dy }$$
if $n=1$.

This defines a  family of smooth log-concave distributions that are symmetric about $0$.  Moreover, as $N$ goes to infinity, $f_n^N$ converges in norm $L^1$ to $\Phi(p_n)$.

Let $\{g_n^N\}_{N\in \N}$ be as in Lemma \ref{TechnicalLemma2}.
Using properties (1) and (2), we see that 
\begin{gather*}
     ||f_n^N * g_n^N - \Phi(p_n)*\Phi(q_n)||_\infty \leq \\ ||f_n^N * g_n^N - \Phi(p_n) * g_n^N||_\infty + ||\Phi(p_n)*g_n^N -\Phi(p_n)*\Phi(q_n)||_\infty \\  \leq M||f_n^N-\Phi(p_n)||_{L_1} + 2||g_n^N-\Phi(q_n)||_{L_1} \xrightarrow{N \rightarrow \infty} 0.
\end{gather*}
In particular, $(f_n^N * g_n^N)|\Z$ converges uniformly to $(\Phi(p_n)*\Phi(q_n))|\Z = p_n*q_n $ (see Lemma \ref{TechnicalLemma1}).

We have seen in the proof of Theorem \ref{TheoremDiscret} that if $n$ is even,
\begin{gather*}
     p_n*q_n (-n)<p_n*q_n (-n+1)>p_n*q_n (-n+2)<\ldots \\  <p_n*q_n (-1) >p_n*q_n (0)<p_n*q_n (1)>  \ldots \\ >p_n*q_n (n-2)<p_n*q_n (n-1)>p_n*q_n (n).
\end{gather*}

Hence for $N$ large enough, we also have 

\begin{align*}
  &  f_n^N * g_n^N (-n)<f_n^N * g_n^N (-n+1)>f_n^N * g_n^N (-n+2)<\ldots \\ & <f_n^N * g_n^N (-1) >f_n^N * g_n^N (0)<f_n^N * g_n^N (1)>  \ldots \\ & >f_n^N * g_n^N (n-2)<f_n^N * g_n^N (n-1)>f_n^N * g_n^N (n),
\end{align*}
which means (using Rolle's theorem) that $f_n^N * g_n^N$ has at least $n$ modes.
Moreover, the number of modes must be even, since $0$ is not a mode (and the convolution is symmetric with respect to $0$).

The same reasoning applies if $n$ is odd and strictly greater than $1$, in which case 
\begin{gather*}
     p_n*q_n (-n)<p_n*q_n (-n+1)>p_n*q_n (-n+2)<\ldots \\ 
     >p_n*q_n (-1) <p_n*q_n (0)>p_n*q_n (1)<  \ldots \\>p_n*q_n (n-2)<p_n*q_n (n-1)>p_n*q_n (n)
\end{gather*}

and

\begin{align*}
  &  f_n^N * g_n^N (-n)<f_n^N * g_n^N (-n+1)>f_n^N * g_n^N (-n+2)<\ldots \\ & >f_n^N * g_n^N (-1) <f_n^N * g_n^N (0)>f_n^N * g_n^N (1)<  \ldots \\ & >f_n^N * g_n^N (n-2)<f_n^N * g_n^N (n-1)>f_n^N * g_n^N (n)
\end{align*}
for $N$ large enough, which again means that  $f_n^N * g_n^N$ has at least $n$ modes.
Moreover, as $(g_n^N)'(-n+2-\frac{1}{2})=-(g_n^N)'(n-2+\frac{1}{2})>0 $ by property (3) we see that   the second derivative of $f_n^N * g_n^N$ in $0$ is negative if $N$ is large enough  by considering $(f_n^N)'$ (which converges to $\frac{1}{2}\left(-\delta_{-n+2-\frac{1}{2}}+ \delta_{n-2+\frac{1}{2}}\right) $, where $\delta_x$ is the Dirac distribution in $x\in \R$). Hence, $0$ is a mode of  $f_n^N * g_n^N $, which has an odd number of modes.

If $n=1$, likewise, $ p_1*q_1 (-1)<p_1*q_1 (0)>p_1*q_1 (1)$ and $f_1^N * g_1^N (-1)<f_1^N * g_1^N (0)>f_1^N * g_1^N (1)$ for $N$ large enough, which implies that  $f_1^N * g_1^N$ has at least one mode.
\end{proof}

\begin{remark}
By adding more technical conditions  and being more careful than we are in Lemma \ref{TechnicalLemma2} when defining the functions $g_n^N$, it can be tediously shown that we can make it so that $f_n^N*g_n^N$ has exactly $n$ modes.

The idea behind it is to make sure that the convolution alternates between being strictly convex and strictly concave, so that it admits exactly one mode on each interval on which it is convex. To do so, one needs to consider the second derivative $(f_n^N)'*(g_n^N)'$ of $f_n^N*g_n^N$, and use the fact that $(f_n^N)'$ converges nicely to a normalized sum of Dirac distributions and that $g_n^N$ can be chosen so that its derivative also converges to a  
weighted sum of Dirac distributions. Additional conditions concerning the behavior of $g_n^N$ on certain pairs of segments must also be added.

\end{remark}

\bibliographystyle{alpha}
\bibliography{Bibliography}

\end{document}